\documentclass[sn-mathphys-num]{sn-jnl}

\usepackage[english]{babel}
\usepackage{amsmath,amssymb,amsthm}

\theoremstyle{definition}
\newtheorem{theorem}{Theorem}[section]
\newtheorem{example}[theorem]{Example}

\newtheorem{definition}[theorem]{Definition}
\newtheorem{remark}[theorem]{Remark}
\newtheorem{algorithm}[theorem]{Algorithm}

\newenvironment{ALGORITHM}  
  {\goodbreak\bigskip\hrule\begin{algorithm}}
  {\end{algorithm}\hrule\goodbreak}



\newcommand\ie{\textit{i.e.}}
\newcommand\point{P}

\usepackage[T1]{fontenc}
\usepackage[all]{xy}
\usepackage[utf8]{inputenc}
\usepackage{adjustbox}
\usepackage{amsfonts}
\usepackage{amsmath}
\usepackage{amssymb}
\usepackage{booktabs} 
\usepackage{booktabs}       
\usepackage{calligra}
\usepackage{color}
\usepackage{enumitem} 
\usepackage{fancybox}
\usepackage{graphicx}
\usepackage{listings}
\usepackage{subfig}
\usepackage{tikz-cd}
\usepackage{tikz}
\usepackage{ulem}
\usepackage{xcolor}
\usepackage{xspace}
\usepackage{array} 
\usetikzlibrary{arrows}
\usetikzlibrary{calc}

\newcommand{\Groebner}{Gr\"obner\xspace}
\newcommand{\ZeroCasesToDo}{\text{VanishingToDo}\xspace}
\newcommand{\NewZeroCases}{\text{NewVanishing}\xspace}

\newcommand{\A}{\mathcal{A}}
\DeclareMathOperator{\LPP}{LPP}
\newcommand{\MBLPP}{{\mathrm{MB}}}
\newcommand{\minG}{G_{\MBLPP}}
\DeclareMathOperator{\LM}{LM}
\DeclareMathOperator{\LC}{LC}

\newcommand \ideal[1] {\langle #1 \rangle}
\newcommand{\Ia}{\mathfrak{a}}  
\newcommand{\Ib}{\mathfrak{b}}  
\newcommand{\Ic}{\mathfrak{c}}  
\newcommand{\Iz}{\mathfrak{z}}  
\newcommand{\zeroes}{\mathbb{V}}  
\newcommand \ginKA {\mathfrak{g}}
\DeclareMathOperator \GG{{\mathrm CGS}}
\newcommand \FinalCases {\GG}
\newcommand \FinalCasesI {{\GG}_I}
\newcommand \pointsA {\mathcal{A}}
\newcommand \evalpoint {\mathrm{eval}_{\point}}

\usetikzlibrary{arrows}
\tikzset{
v/.style={
  circle, draw, inner sep=2pt, minimum size=3pt, fill=black}
}

\newcommand{\CC}{\mathbb{C}}

\newcommand{\QQ}{\mathbb{Q}}

\renewcommand{\to}{\longrightarrow}

\def \lex{{\rm lex}}

\begin{document}

\title{A new iterative algorithm for comprehensive Gr\"obner systems
}



\author*[1]{\fnm{Anna Maria} \sur{Bigatti}}\email{bigatti@dima.unige.it}

\author*[2]{\fnm{Elisa} \sur{Palezzato}}\email{elisa.palezzato@gmail.com}

\author*[3]{\fnm{Michele} \sur{Torielli}}\email{michele.torielli@nau.edu}

\affil*[1]{\orgdiv{Dipartimento di Matematica},
  \orgname{Universit\`a degli Studi di Genova},
  \orgaddress{\street{Via Dodecaneso 35}, \city{Genova},
    \postcode{16146},
    \state{},
    \country{Italy}}}


\affil*[2]{\orgdiv{Department of Mathematics}, \orgname{Hokkaido University}, \orgaddress{\street{Kita 10, Nishi 8, Kita-Ku}, \city{Sapporo}, \postcode{060-0810}, \state{Hokkaido}, \country{Japan}}}

\affil[3]{\orgdiv{Department of Mathematics and Statistics}, \orgname{Northern Arizona University}, \orgaddress{\street{801 S. Osborne Dr.}, \city{Flagstaff}, \postcode{86001}, \state{Arizona}, \country{USA}}}




\abstract{

    A    Comprehensive Gr\"obner system for a parametric ideal $I$ in $K(A)[X]$ represents the
collection of all Gr\"obner bases of the ideals $I'$ in $K[X]$ obtained as the values of the
parameters $A$ vary in $K$.
 
The recent algorithms for computing them 
consider the corresponding ideal $J$ in $K[A,X]$, and are
based on stability of Gr\"obner bases of ideals under specializations
of the parameters.  Starting from a Gr\"obner basis of $J$, the
computation splits recursively depending on the vanishing of the
evaluation of some ``coefficients'' in $K[A]$.

In this paper, taking inspiration from the algorithm described by
Nabeshima, we create a new iterative algorithm to compute
comprehensive Gr\"obner systems.
We show how we keep track of the sub-cases to be considered, and how
we avoid some redundant computation branches using
``comparatively-cheap'' ideal-membership tests, instead of
radical-membership tests.
}

\keywords{Comprehensive Gr\"obner systems, Gr\"obner basis,  Algorithm, Radical membership}

 \pacs[MSC Classification]{68W30, 13P10}

\maketitle


\section{Introduction}\label{intro}


  Let us start with a simple example to introduce the central object of this paper.
Consider 
${\cal C}_1$, a circle with center $(0,0)$ and radius $1$,
and ${\cal C}_2$, 
a circle with center $(c,0)$ and radius $\sqrt{r}$.
What is their intersection as $c$ and $r$ vary in $\CC$?
We can see that there are three cases:
if $c=0, \, r=1$ the two circles coincide,
if $c=0, \, r\ne1$ the two circles do not intersect,
if $c\ne0$ the two circles intersect in two points in $\CC^2$
(may be one double point).

Algebraically speaking, 
the ideal   $I = \ideal{x^2 {+}y^2 {-}1, \; (x{-}c)^2 {+}y^2{-} r}$,
defined by the equations of $\mathcal{C}_1$ and $\mathcal{C}_2$,
  has a collection of different \Groebner bases depending on the
  values of the parameters.
  This is described by a \textit{Comprehensive Gr\"obner system}, which is a set of pairs
$\{(\mathcal{A}_1, G_1),\dots, (\mathcal{A}_l, G_l)\}$, 
 the \Groebner bases $G_i$ under the conditions $\mathcal{A}_i$.
For example, one such pair is
$\mathcal{A}_1 = \{(r,c)\in\CC^2 \mid c\ne0\}$
and its corresponding 
$\lex$-Gr\"obner basis 
($\lex$ with $x>y>c>r$)
\\
\phantom.\hfil$\{\;
x^2  +y^2  -1,
\;\;
c x -\frac{1}{2} c^2  +\frac{1}{2} r -\frac{1}{2},
\;\;
 {c^2} {y^2}  +\frac{1}{4} c^4  -\frac{1}{2} r c^2  +\frac{1}{4} r^2  -\frac{1}{2} c^2  -\frac{1}{2} r +\frac{1}{4}
 \;\}
$

   Comprehensive Gr\"obner systems for parametric ideals were introduced, constructed, and studied in 1992 by Weispfenning, see \cite{wei92}. After Weispfenning's work, Kapur in \cite{Kap95} introduced an algorithm for parametric Gr\"obner bases. However, there was no big development on comprehensive Gr\"obner systems for around ten years. On the other hand, around 2003, big developments were made by Kapur-Sun-Wang \cite{KSW2010}, Montes \cite{montes02}, Nabeshima \cite{Nabe2007}, Sato and Suzuki \cite{SS2003, SS2006}, and Weispfenning \cite{wei2003}.
 
Some of algorithms for computing comprehensive Gr\"obner systems are
based on stability of Gr\"obner bases of ideals under specializations.
Each algorithm has a different ``stability condition'' for monomial
bases.

In this paper, taking inspiration from \cite{Nabe2012}, we create a new iterative algorithm to compute comprehensive Gr\"obner systems.

  In Section~\ref{sec:Notation}, we recall the definition of a comprehensive Gr\"obner system and we fix the notation and basic definitions needed for the rest of the paper.
  In Section~\ref{sec:Stability}, we describe the
  stability conditions   used in previous papers to construct comprehensive Gr\"obner systems involving only computations in $K[A,X]$ instead of doing Gr\"obner basis computations in $K(A)[X ]$.
  In Section~\ref{sec:Iterative}, we describe our new iterative approach to compute comprehensive Gr\"obner system and we prove its correctness.
  In Section~\ref{sec:Timings}, we illustrate timings and statistics of our algorithm on known examples.
  In Section~\ref{sec:Future}, we provide some ideas for further
  investigations and possible improvements.







\section{Notation}\label{sec:Notation}



  Let $K$ be a field, $\bar K$ its algebraic closure,  $X$ a set of $N_X$ indeterminates,
and $A$ a set of $N_A$ parameters.
We work in the polynomial ring
$K[A,X] = K[a_1, \dots, a_{N\!_A}, \; x_1,\dots,x_{N\!_X}]$.

  We want to study what happens when we specialize the
  parameters $A$.

      \begin{definition}
      Fixed a point $\point\in\bar K^{N\!_A}$,
      we indicate with     $\evalpoint(f)$
      the evaluation homomorphism  from $K[A]$ to~$\bar K$,\ $f\mapsto f(P)$.
Then we  also indicate with $\evalpoint$ its natural ``coefficientwise'' extension
$ K[A,X]\rightarrow \bar K[X]$,\ $f\mapsto f(P,X)$.
      \end{definition}

     For example,  given the point $\point =(1,3)\in\QQ^{2}$ and
$f = 3axy^2 -bxy^2 +ax^2 +b $ in $ \QQ[a,b,\;x,y]$,
we have
$\evalpoint(f) 
= x^2 +3 \in \QQ[X]$.


   Fixed a term-ordering $\tau$
   on the power-products of
  $K[A,X]$, for a polynomial $f\in K[A,X]$
  we denote by $\LPP(f)\in K[A,X]$ the $\tau$-greatest power-product
  in the support of~$f$, by $\LC(f)\in K$ its coefficient, and
  by $\LM(f) = \LC(f)\cdot \LPP(f)$.

  Throughout this paper we will only consider 
\textit{elimination orderings}
 for~$X$, \ie~such that any power-product only in $A$
is smaller than all power-products containing some
$x_i\in X$. 
  Such orderings reflect the interpretation of $K[A,X]$ as
  $K[A][X]$, with the polynomials in $A$
  seen as ``parametric coefficients'':

    \begin{definition}
Given on $K[A,X]$ an elimination ordering $\tau$ for $X$, we indicate
with~$\tau_X$ its restriction to the power-products only in~$X$.
Then we indicate  with $\LPP_X(f)\in K[X]$
the $\tau_X$-greatest power-product of~$f$ as a polynomial in $K[A][X]$,
and with
  $\LC_X(f)$ its coefficient in $K[A]$.
    \end{definition}

   \begin{example}
     For $f = (a^2{-}b)x^3y + 3abxy^2\in \QQ[a,b,x,y]$ equipped
     with any elimination ordering for $X=\{x,y\}$
     such that $x^3y >xy^2$, e.g. $\tau=\lex$,
       then we have $\LPP(f)=a^2x^3y$ with $\LC(f)=1$, and
      $\LPP_X(f)=x^3y$ with $\LC_X(f)=a^2{-}b$.
   \end{example}

  Throughout this paper, we indicate with gothic letters $\Ia$, $\Ib$,
  $\ginKA$ ideals in $K[A]$ (and their embedding in $K[A,X]$), and with
  $I$, $J$ ideals in $K[A,X]$.

   Now we can formally define a \textit{Comprehensive Gr\"obner system}.

%

\begin{definition}
  Given an ideal  $I$ in $K[A, X]$,
    a finite set of pairs, called \textbf{segments}, 
$\GG=\{(\mathcal{A}_1, G_1),\dots, (\mathcal{A}_\ell, G_\ell)\}$
 is a   \textbf{comprehensive Gr\"obner system} for   $I$ if
  
 \begin{itemize}
\item $\mathcal{A}_1,\dots, \mathcal{A}_\ell$ are
   \textit{algebraically constructible} subsets of $\bar K^{N\!_A}$
\\   (\ie~$\mathcal{A}_j=\zeroes(\Ia_j)\setminus\zeroes(\Ib_j)$  with $\Ia_j,\Ib_j $
   ideals in $K[A]$)
   and $\cup\mathcal{A}_j = \bar K^{N\!_A}$
  
\item $G_1,\dots, G_\ell$ are finite subsets of $K[A, X]$
 
\item
  for each \textit{segment} $(\A_j,G_j)$ in $\GG$    
    and  for any point   $\point\in\mathcal{A}_j$ we have that
  $\evalpoint(G_j)$ is a $\tau_X$-Gr\"obner basis of the
  ideal $\evalpoint(I)$ in~$\bar K[X]$.
  
 \end{itemize}

  

  
 
  

\end{definition}

\section{Stability and algorithms}\label{sec:Stability}

  In this section we recall how recent algorithms use stability
  conditions to   construct comprehensive Gr\"obner systems
involving only computations in $K[A,X]$ instead of having to
track the arithmetic inside a Gr\"obner basis computation in
$K(A)[X]$.



Suzuki-Sato in \cite{SS2006}
observed that the  stability theorem
proved by  Kalkbrener (for any
ring homomorphism $\phi:K[A]\to\bar K$),
re-read  in terms of $\evalpoint$,
provided this new approach for computing comprehensive Gr\"obner systems.

\begin{theorem}[Kalkbrener \cite{Kal1997}, Suzuki-Sato \cite{SS2006}]\label{thm:Kal-SS}
  Let $I$ be an ideal in $K[A,X]$.
  Let $\tau$ be an~$X$-elimination term-ordering on the power-products
  in $K[A,X]$
  and $G$  a $\tau$-Gr\"obner basis of  $I$.

Fix a point $\point\in\bar K^{N\!_A}$ such that
$\evalpoint(\LC_X(g)){\ne}0$ for each $g \in G\setminus(G{\cap} K[A])$.
Then, 
$\evalpoint\{G\}$ is a $\tau_X$-Gr\"obner basis of  $\evalpoint(I)$.
\end{theorem}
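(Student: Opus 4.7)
My plan is to verify Buchberger's criterion for $\evalpoint(G)$ in $\bar K[X]$ with respect to~$\tau_X$. First I dispose of the trivial case: if some $g \in G \cap K[A]$ evaluates to a nonzero scalar, then $\evalpoint(G)$ contains a unit and the conclusion is immediate. So I may assume $\evalpoint(g) = 0$ for every $g \in G \cap K[A]$, and set $G' := G \setminus (G \cap K[A])$. For each $g \in G'$, writing $g = \LC_X(g)\,\LPP_X(g) + \tilde g$ with $\tilde g$ supported on strictly $\tau_X$-smaller $X$-monomials, the hypothesis $\evalpoint(\LC_X(g)) \neq 0$ ensures that the $\tau_X$-leading power-product of $\evalpoint(g)$ is exactly $\LPP_X(g)$, with nonzero leading coefficient.

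For each pair $g_i, g_j \in G'$, set $t_i := \LPP_X(g_i)$, $t_j := \LPP_X(g_j)$, $T_X := \operatorname{lcm}(t_i, t_j)$, and form the auxiliary polynomial
\[
P_{ij} := \LC_X(g_j)\,\tfrac{T_X}{t_i}\,g_i - \LC_X(g_i)\,\tfrac{T_X}{t_j}\,g_j \in I.
\]
The two leading $X$-contributions cancel (both equal to $\LC_X(g_i)\,\LC_X(g_j)\,T_X$), so $\LPP_X(P_{ij}) <_{\tau_X} T_X$, and a direct computation gives $\evalpoint(P_{ij}) = \evalpoint(\LC_X(g_i))\,\evalpoint(\LC_X(g_j))\,S_{\tau_X}(\evalpoint(g_i), \evalpoint(g_j))$, with nonzero scalar prefactor. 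Since $P_{ij} \in I$ and $G$ is a $\tau$-Gr\"obner basis, there is a $\tau$-standard representation $P_{ij} = \sum_{g \in G} h_g g$ with $\LPP_\tau(h_g g) \leq_\tau \LPP_\tau(P_{ij})$. Applying $\evalpoint$ kills the $g \in G \cap K[A]$ summands and dividing by the nonzero scalar prefactor yields a representation
\[
S_{\tau_X}(\evalpoint(g_i), \evalpoint(g_j)) = \sum_{g \in G'} q_g\,\evalpoint(g)
\]
in $\bar K[X]$. Careful tracking of leading $X$-parts --- using $\evalpoint(\LC_X(g)) \neq 0$ for $g \in G'$ and the $X$-elimination structure of $\tau$ --- gives the bound $\LPP_{\tau_X}(q_g\,\evalpoint(g)) <_{\tau_X} T_X$ for every summand, i.e.\ a standard representation of $S_{\tau_X}(\evalpoint(g_i), \evalpoint(g_j))$ with respect to the lcm $T_X$, closing Buchberger's criterion.

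The main obstacle is precisely this leading-part tracking. The $X$-elimination property in its weak form (``any power-product only in $A$ is smaller than all power-products containing some $x_i$'') does not immediately imply that $\tau$-bounds transfer to $\tau_X$-bounds on $X$-parts, and $\LPP_X(f)$ need not coincide with the $X$-part of $\LPP_\tau(f)$ in general. The reduction of the $\tau$-standard representation of $P_{ij}$ to a $\tau_X$-standard representation of $S_{\tau_X}(\evalpoint(g_i), \evalpoint(g_j))$ therefore requires delicate bookkeeping, conveniently carried out by passing through the localization $K(A)[X]$ in which leading $X$-coefficients become units and clearing denominators in a way that does not vanish at~$P$, thanks precisely to the hypothesis. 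This reorganization is the technical heart of Kalkbrener's argument; once completed, combined with the fact that $\evalpoint(G')$ generates $\evalpoint(I)$, it delivers the desired $\tau_X$-Gr\"obner-basis property of $\evalpoint(G)$.
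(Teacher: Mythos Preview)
The paper does not give its own proof of this theorem: it is quoted as a result of Kalkbrener and Suzuki--Sato and used as a black box, so there is nothing in the paper to compare your argument against.

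As for your sketch on its own merits: the overall strategy (dispose of the unit case, then verify Buchberger's criterion for $\evalpoint(G')$ by lifting each $S$-polynomial to the auxiliary element $P_{ij}\in I$, taking a $\tau$-standard representation, and specializing) is sound, and your identification of $\evalpoint(P_{ij})$ with a nonzero scalar multiple of $S_{\tau_X}(\evalpoint(g_i),\evalpoint(g_j))$ is correct. However, the passage you yourself flag as ``the technical heart'' is not actually carried out: you assert that ``careful tracking of leading $X$-parts'' yields the bound $\LPP_{\tau_X}(q_g\,\evalpoint(g)) <_{\tau_X} T_X$, but then immediately concede that under the weak elimination hypothesis the $X$-part of $\LPP_\tau(f)$ need not equal $\LPP_X(f)$, and defer the resolution to ``Kalkbrener's argument'' via $K(A)[X]$. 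That is precisely the step that needs to be written down; without it, what you have is a proof \emph{plan} rather than a proof. If you intend this as a genuine proof, you must either (i) strengthen the hypothesis to a block ordering so that $\tau$-bounds on monomials in $K[A,X]$ project to $\tau_X$-bounds on their $X$-parts, making the transfer immediate, or (ii) actually execute the detour through $K(A)[X]$: show that $G'$ is a $\tau_X$-Gr\"obner basis over the field $K(A)$, obtain a $\tau_X$-standard representation there, and then clear denominators by an element of $K[A]$ that does not vanish at $P$ (which exists because the only denominators arising are products of the $\LC_X(g)$).
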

  





  We briefly illustrate with a small example how they applied this
theorem for constructing a comprehensive Gr\"obner system.

\begin{example}\label{ex:two-circles}
  Consider again, from the Introduction,
  $I := \langle f,g \rangle \in \CC[c,r, x,y]$,
  where $f := x^2 +y^2 -1$ and  $g := (x{-}c)^2 +y^2- r$,
  and we fix the term-ordering $\lex$ with $x>y>c>r$, which is an elimination ordering for $\{x,y\}$.
The first step of the algorithm consists in computing
a $\lex$-reduced Gr\"obner basis of $I$:
\\
$G  = \{\underline{\;x^2}  +y^2  -1, \quad
\underline{\;{c} {x}\;} -\frac{1}{2} c^2  +\frac{1}{2} r -\frac{1}{2},$
\\\phantom.\hfill
$\underline{\;{r} {x} {-} {x}} -2 c y^2  -\frac{1}{2} c^3  +\frac{1}{2} r c
+\frac{3}{2} c, \quad
    \underline{\;{c^2} {y^2}}  +\frac{1}{4} c^4  -\frac{1}{2} r c^2  +\frac{1}{4} r^2  -\frac{1}{2} c^2  -\frac{1}{2} r +\frac{1}{4}
\;\}.$

From Theorem~\ref{thm:Kal-SS} we know that $G$
is a $\lex_{x,y}$-Gr\"obner basis of
 the specialization of $I$ 
 for any $(c,r)$ in
 $\A_1 = \bar\QQ^2\setminus(\zeroes(c)\cup\zeroes(r{-}1)\cup\zeroes(c^2))$. 
%
   Then we recursively deal with the remaining zero cases
   $\zeroes(c)$, \;  $\zeroes(r-1)$, \; $\zeroes(c^2)$
by  recursively running the algorithm on the ideals 
      $I+\langle {c}\rangle$, \;  $I+\langle {r{-}1}\rangle$,
   \; {$I+\langle {c^2}\rangle$}.
   
\end{example}

\begin{remark}\label{rem:radical}
Notice that  we can reduce the number of recursion steps by considering
  the radical of the ideals, in this example $\sqrt{\ideal{c^2}} =
  \ideal{c}$ (see also Section~\ref{sec:Avoiding}).
Unfortunately, computing the radical of an ideal is difficult
to implement and possibly quite slow.
\end{remark}

There is a delicate balance to keep in mind:
reducing the number of cases to be considered, while controlling the
cost for detecting them.

In this direction,  further advance was  achieved by proving
that just a subset of the Gr\"obner basis 
in $K[A,X]$
  needs to be considered.
  In \cite{KSW2010} Kapur-Sun-Wang discovered a new stability
  condition, and
  in \cite{Nabe2012}, Nabeshima improved it, further reducing
  redundant computation branches.

They are both centered on the following definition. 




\begin{definition}\label{def:MB}
For a finite set $F\subseteq K[A,X]$
we define {\boldmath$\MBLPP(F)$} to be the
\textbf{Minimal Basis} of the monomial ideal $\ideal{\LPP_X(f) \mid f\in F} \;
\subset K[X]$
(\textit{e.g.}~in  Example~\ref{ex:two-circles} we have $\MBLPP(G)=\{x,y^2\}$).
\end{definition}


\begin{theorem}\label{thm:KSW-N}
  Let $I$ be an ideal in $K[A,X]$.
  Let $\tau$ be an~$X$-elimination term-ordering on the power-products
  in $K[A,X]$
and $G$  a $\tau$-Gr\"obner basis of  $I$.

Let $\ginKA = G\cap K[A]$ and
name $\{t_1, \dots, t_s\}$ the power-products in $\MBLPP(G\setminus \ginKA)$.

\begin{description}
  \item[(Kapur-Sun-Wang \cite{KSW2010})]\label{thm:KSW10}
    For each~$t_i$, select one polynomial $g_{t_i}{\in} G$
    (not uniquely determined) such that
    $\LPP_X(g_{t_i})=t_i$ 

  Then, for all $\point \in
\zeroes(\ginKA)\setminus(\zeroes(\LC_X(g_1))\cup \zeroes(\LC_X(g_2)) \cup
\cdots\cup \zeroes(\LC_X(g_s)))$,\\
$\evalpoint(\{g_{t_1},\ldots,g_{t_s}\})$
is a $\tau_X$-Gr\"obner basis of $\evalpoint(I)$ in $\bar{K}[X]$.


 

\item[(Nabeshima \cite{Nabe2012})]\label{thm:N2012}
  For each~$t_i$, let $G_{t_i} =\{g\in G \mid \LPP_X(g) = t_i\}$.

  Then, for all
$\point \in\zeroes(\ginKA)\setminus
(   \zeroes(\LC_X(G_{t_1})
\cup\zeroes(\LC_X(G_{t_2})
\cup \cdots \cup \zeroes(\LC_X(G_{t_s}))$,\\
$\evalpoint(G_{t_1} \cup G_{t_2} \cup \cdots\cup G_{t_s})$
is a $\tau_X$-Gr\"obner basis  of $\evalpoint(I)$
 in $\bar{K}[X]$.
\end{description}

\end{theorem}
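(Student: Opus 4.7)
The plan is to invoke the classical Gr\"obner-basis criterion in $\bar K[X]$: a finite subset $H\subseteq J$ of an ideal $J$ is a $\tau_X$-Gr\"obner basis of $J$ if and only if $\ideal{\LPP_{\tau_X}(h):h\in H}=\mathrm{in}_{\tau_X}(J)$. Two observations set up the argument. First, since $\point\in\zeroes(\ginKA)$, every $g\in\ginKA$ evaluates to zero, so $\evalpoint(I)=\ideal{\evalpoint(g):g\in G\setminus\ginKA}$ in $\bar K[X]$. Second, the nonvanishing hypothesis $\evalpoint(\LC_X(g_{t_i}))\neq 0$ forces $\LPP_{\tau_X}(\evalpoint(g_{t_i}))=t_i$, so the chosen polynomials lie in $\evalpoint(I)$ and the easy inclusion $\ideal{t_1,\dots,t_s}\subseteq\mathrm{in}_{\tau_X}(\evalpoint(I))$ is immediate.

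The crux is the reverse inclusion $\mathrm{in}_{\tau_X}(\evalpoint(I))\subseteq\ideal{t_1,\dots,t_s}$. I would prove it by showing that for every $g\in G\setminus\ginKA$ the image $\evalpoint(g)$ can be fully reduced to zero by the $\tau_X$-division algorithm modulo $\{\evalpoint(g_{t_1}),\dots,\evalpoint(g_{t_s})\}$; together with the first observation, this forces every element of $\evalpoint(I)$ to reduce to zero, and hence its $\tau_X$-leading power-product must lie in $\ideal{t_1,\dots,t_s}$. The reduction is feasible because each $\evalpoint(\LC_X(g_{t_i}))$ is a unit in $\bar K$ and each $\LPP_X(g)$ is a multiple of some $t_i$ by minimality of $\{t_1,\dots,t_s\}$, and it terminates because $\tau_X$ is a well-ordering. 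To turn ``reduction to a strictly smaller remainder'' into ``reduction to zero'' I would lift the reduction back to $K[A,X]$ using the standard-representation property of the $\tau$-Gr\"obner basis $G$ and the fact that $\tau$ is an $X$-elimination ordering, so that the $X$-leading power-product of any element of $I$ is controlled by the $\LPP_X(g)$'s.

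The main technical obstacle is handling those $g\in G\setminus\ginKA$ (other than the chosen $g_{t_i}$) whose $\evalpoint(\LC_X(g))$ vanishes: such a $g$ specializes to a polynomial whose $\tau_X$-leading power-product is strictly smaller than $\LPP_X(g)$, and one must still verify that this smaller power-product belongs to $\ideal{t_1,\dots,t_s}$. The natural way is a simultaneous induction on $\tau_X$, using the elimination property to prevent ``new'' $X$-power-products from appearing after specialization. For Nabeshima's refinement the same strategy applies after replacing the single representative $g_{t_i}$ by the fiber $G_{t_i}=\{g\in G\mid\LPP_X(g)=t_i\}$: the weaker hypothesis that not all $\LC_X(g)$ with $g\in G_{t_i}$ vanish simultaneously at $\point$ still exhibits at least one specialized element of $\evalpoint(I)$ with $\tau_X$-leading power-product exactly $t_i$, and the same criterion yields the conclusion.
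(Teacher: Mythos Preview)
The paper does not prove this theorem; it is stated with attribution to \cite{KSW2010} and \cite{Nabe2012} and used as a black box. So there is no ``paper's own proof'' to compare against beyond those citations (which ultimately rest on Kalkbrener's stability theorem, Theorem~\ref{thm:Kal-SS} here in its weak form).

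Your outline contains a genuine gap. You argue: (A) every $\evalpoint(g)$ with $g\in G\setminus\ginKA$ reduces to zero modulo $H=\{\evalpoint(g_{t_1}),\dots,\evalpoint(g_{t_s})\}$; (B) since these images generate $\evalpoint(I)$, every element of $\evalpoint(I)$ reduces to zero modulo $H$; (C) hence $\mathrm{in}_{\tau_X}(\evalpoint(I))\subseteq\ideal{t_1,\dots,t_s}$. Step (B) is false in general: ``each listed generator reduces to zero modulo $H$'' does \emph{not} imply ``$H$ is a Gr\"obner basis of the ideal they generate''. This is exactly the gap Buchberger's $S$-polynomial criterion is designed to close. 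For a concrete failure, take $H=\{xy-1,\;y^2-1\}$ in $K[x,y]$ with $\lex$, $x>y$: each element of $H$ trivially reduces to $0$ modulo $H$, yet $x-y\in\ideal{H}$ has leading term $x\notin\ideal{xy,y^2}$. Your lifting-and-induction idea is deployed to establish (A), but (A) alone does not yield the conclusion.

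The arguments in \cite{Kal1997,KSW2010,Nabe2012} bypass the generators and treat an arbitrary nonzero $f\in\evalpoint(I)$ directly. One chooses a lift $F\in I$ with $\evalpoint(F)=f$ and $\LPP_\tau(F)$ minimal among all such lifts. Since $G$ is a $\tau$-Gr\"obner basis, $\LPP_\tau(F)$ is divisible by $\LPP_\tau(g)$ for some $g\in G$; if $g\in\ginKA$ then $\evalpoint(g)=0$ and subtracting the appropriate monomial multiple of $g$ from $F$ yields a lift with strictly smaller $\LPP_\tau$, contradicting minimality. Hence $g\in G\setminus\ginKA$, and the elimination property forces $\LPP_X(g)\mid\LPP_X(F)$. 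A further minimality argument (this is where Kalkbrener's work enters nontrivially) shows one can arrange $\LPP_X(F)=\LPP_{\tau_X}(f)$, giving $\LPP_{\tau_X}(f)\in\ideal{t_1,\dots,t_s}$. If you want to rescue your approach, the induction on $\tau_X$ must be over \emph{all} elements of $\evalpoint(I)$, not just the images $\evalpoint(g)$; but then step (A) becomes superfluous and you are essentially reproducing Kalkbrener's proof.
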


  \begin{example}
    Consider again the two circles from Example~\ref{ex:two-circles}:
    we have $\MBLPP(G) = \{x, y^2\}$.
    So by Theorem~\ref{thm:N2012}(Nabeshima) we
    have $\ginKA = \ideal{0}$,
     \\ $G_{x} =\{\underline{{c} {x}} -\frac{1}{2} c^2  +\frac{1}{2} r -\frac{1}{2}, \quad
\underline{{r} {x} {-1} {x}} -2 c y^2  -\frac{1}{2} c^3  +\frac{1}{2} r c +\frac{3}{2} c\}$, 
     \\ $G_{{y^2}} =\{ \underline{{c^2} {y^2}}  +\frac{1}{4} c^4  -\frac{1}{2} r
c^2  +\frac{1}{4} r^2  -\frac{1}{2} c^2  -\frac{1}{2} r
+\frac{1}{4}\}$.

Then, for all
$\point \in\bar\QQ\setminus(\zeroes(\ideal{c,r-1}) \cup\zeroes(\ideal{c^2}))$,
$\evalpoint(G_{{x}} \cup G_{{y^2}})$
is a $\lex_{\{x,y\}}$-Gr\"obner basis  of $\evalpoint(I)$
in $\bar{\QQ}[x,y]$.

  \end{example}






\subsection{Avoiding redundancy}\label{sec:Avoiding}

In Remark~\ref{rem:radical}
we  pointed out that we may compute \textit{radicals} of the ideals $\Ia$ in $K[A]$
because they describe the same set of points in $\bar K^{N\!_A}$, and,
by doing so, we likely avoid some inconclusive intermediate step in
the algorithm.
The Gr\"obner basis computation of $I+\sqrt{\Ia}$
should generally be easier (but not always so).
The problem here is that computing $\sqrt{\Ia}$ is hard to
implement and may be quite slow to run, so the cost of such computation could be
higher than the saving it produces.

In \cite{Nabe2012} redundancy is avoided by checking, before computing
the Gr\"obner basis of $I+\Ia \in K[A,X]$, whether or not $\zeroes(\Ia)$ 
actually contains some new point 
not yet been considered in the previous recursive calls.
The information of the processed sets of points
is passed through the recursive calls in the form of one ideal: in fact,
note that
$\zeroes(\Ib_1)\cup \zeroes(\Ib_2) = \zeroes(\Ib_1 \cdot  \Ib_2)$,
so the union of all zero sets  may be represented by just one
ideal $\Ib$.
Then, by checking that  $\zeroes(\Ia)\setminus\zeroes(\Ib)\ne\emptyset$,
we guarantee that no point in $\bar K^{N_A}$ is considered twice.
This condition, equivalent to $\Ia\not\subseteq \sqrt{\Ib}$,
can be computed by \textit{radical membership} without
actually computing
$\sqrt \Ib$:
e.g. $f \not\in\sqrt{\Ib}$
is equivalent to  $\Ib+\ideal{f{-}1}\ne\ideal1$.
Therefore, radical membership may be computed in any CAS and is
generally cheaper than computing radicals.  However,  it is 
still quite expensive.

\smallskip

  Here we propose a transversal approach:
  we convert the  recursive structure of the algorithm into an iteration,
  collecting all the ideals yet to be considered into one set,
  \ZeroCasesToDo.
The fact of  having all ideals in one
  set, allows to apply radicals and/or radical membership if desired,
  but in addition provides further criteria based on simple ideal membership,
  which is faster to compute (see Remark~\ref{rem:iter-minimalization}).

\section{The iterative algorithm}\label{sec:Iterative}

In general, it is always worth investigating 
whether a recursion could be
translated into an iteration. Thinking this way may produce new insights
and more efficient code.
Instead of recursively passing the objects to be dealt with,
we can collect them into one set (we call it \ZeroCasesToDo), and one
by one we pick and process them.

This approach offers two general optimizations.
One is that we can choose how to keep this set ``minimal'' (where the
meaning of ``minimal'' may vary), and the other is that we can choose
which ``object'' to pick first.

Inspired by
Nabeshima's \cite{Nabe2012} recursive algorithm, we 
designed this iterative algorithm.
We keep \ZeroCasesToDo ``minimal'' in the sense of \textit{ideal
  inclusion} (see Theorem~\ref{thm:minimalized}).

\begin{ALGORITHM}
  \textsc{CGS-Iter}
  \label{GCS:alg:CGS-Iter}
  \begin{description}[topsep=0pt,parsep=1pt]

  \item[Input] $I$ ideal $\subset R=K[A,X]$
     with an $X$-elimination ordering.

  \item[1] \textit{Initialization:}

\ZeroCasesToDo := $\{\ideal{0}\}$ \hfill(set of ideals in $K[A]$)\\
\; $\FinalCases := \emptyset$ \hfill(set of pairs $(\pointsA, G)$, with
$\pointsA{\subset} \bar K^{N\!_A}$ and $G \subset R$)

 \item[2] \textit{Main Loop:} \; 
while \; \ZeroCasesToDo $\ne\emptyset$ \; do

\begin{description}[topsep=0pt,parsep=1pt]
\item[2.1] $\Ia$ := choose and remove one ideal from \ZeroCasesToDo

\item[2.2] $J := I + \Ia$

\item[2.3] Compute $G$, a $\tau$-Gr\"obner basis of $J$

\item[2.4] $\ginKA := J \cap K[A]$
\hfill  \textit{($\longrightarrow$ Note: $\ginKA$ is generated by $G \cap K[A]$,
    \,and\, $\Ia \subseteq \ginKA$)}

\item[2.5] if \; $\zeroes(\Ia) {\setminus} \zeroes(\ginKA)\ne\emptyset$
  \; then
  \hfill  \textit{($\longrightarrow$ Note: $\Ia \subsetneq \ginKA$)}

\begin{description}[topsep=0pt,parsep=1pt]
\item[2.5.1] append  the pair
  $(\zeroes(\Ia){\setminus} \zeroes(\ginKA), \; \{1\})$
  \;  to $\FinalCases$ 
\item[2.5.2] if \;$\ginKA$ contains no ideal in \ZeroCasesToDo \; then
  \hfill  append $\ginKA$  to \ZeroCasesToDo 
\end{description}



\item[2.6] else
  \; \hfill\textit{($\longrightarrow$ Note:  $\zeroes(\Ia) = \zeroes(\ginKA)$)}

  \begin{description}[topsep=0pt,parsep=1pt]

    \item[2.6.1]
      $\MBLPP$ := $\MBLPP(G\setminus\ginKA)$
        $\subseteq K[X]$
    \item[2.6.2]
      $\minG := \{ g \in G \,\mid\, \LPP_X(g) \in \MBLPP \} \subseteq G$
    \item[2.6.3]
      foreach $t\in \MBLPP$ \; let \;
      $\Ic_{t} := \ideal{\LC_X(g) \mid \LPP_X(g) = t} \;\subseteq K[A]$
    \item[2.6.4]
      append         the pair
      $(\;\zeroes(\ginKA) \setminus
      (\bigcup_{t\in \MBLPP} \zeroes(\Ic_t) ), \;\; \minG)$
      \;to       $\FinalCases$
      
    \item[2.6.5]
      \NewZeroCases := minimalized
      $\{ \Ic_t + \ginKA \mid
      t \in \MBLPP  \text{ and } \Ic_t\not\subseteq\ginKA\}$

    \item[2.6.6]
      for each $\Ib$ in \NewZeroCases do
      \\
      if \;$\Ib$ contains no ideal in \ZeroCasesToDo \; then
      \hfill	append $\Ib$  to \ZeroCasesToDo 
  \end{description}
\end{description}
\item[Output] {return} $\FinalCases$;
\end{description}
\end{ALGORITHM}

\begin{theorem}\label{thm:algorithm}
    Let $R =K[A,X] = K[a_1, \dots, a_{N\!_A}, \; x_1,\dots,x_{N\!_X}]$
    and $\tau$ and $X$-elimination ordering for the power-products in $R$.
    Let  $I$ be any ideal in $R$.  Then
    \begin{enumerate}
    \item\label{alg:terminates}
      The algorithm  \textsc{CGS-Iter} with input $I$ terminates.\quad
      Call     $\FinalCasesI$ \ its output.
    \item\label{alg:correctness}
      For each pair $(\pointsA, G)$ in $\FinalCasesI$,
      $\evalpoint(G)$ is a $\tau_X$-Gr\"obner basis for
      $\evalpoint(I)$   for any      point $\point\in\pointsA$. 

    \item  \label{alg:ConsideriamoTuttiIPunti}
      Each point $\point\in K^{N\!_A}$  is in  (at least)  a set
      $\pointsA$ in $\FinalCasesI$.
  \end{enumerate}
    In summary, the output
    $\FinalCasesI$ \ is a $\tau$-comprehensive Gr\"obner system for $I$.
\end{theorem}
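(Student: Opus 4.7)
The plan is to verify the three claims in turn, using Noetherianity of $K[A]$ for termination, the Nabeshima part of Theorem~\ref{thm:KSW-N} for the correctness of each segment, and a single loop invariant tracking how the points of $\bar K^{N\!_A}$ are divided between $\FinalCases$ and the pending ideals in $\ZeroCasesToDo$.

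For~(\ref{alg:terminates}) I would view all ideals ever inserted into $\ZeroCasesToDo$ as the nodes of a rooted tree with root $\ideal{0}$, the children of $\Ia$ being the ideals actually appended during the iteration processing $\Ia$. Every such child strictly contains its parent: in step~2.5.2 this is exactly the note $\Ia \subsetneq \ginKA$; in step~2.6.6 the filter $\Ic_t \not\subseteq \ginKA$ gives $\ginKA \subsetneq \Ic_t+\ginKA$, and then $\Ia \subseteq \ginKA$ yields $\Ia \subsetneq \Ic_t+\ginKA$. Hence every root-to-leaf branch is a strictly ascending chain in the Noetherian ring $K[A]$ and must be finite, while each node has at most $|\MBLPP|+1$ children; by K\"onig's lemma the tree is finite, and the main loop halts.

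For~(\ref{alg:correctness}), pairs enter $\FinalCases$ only in steps~2.5.1 and~2.6.4, and in both cases the point $\point$ under consideration lies in $\zeroes(\Ia)$, so $\evalpoint(\Ia) = 0$ and $\evalpoint(J) = \evalpoint(I+\Ia) = \evalpoint(I)$. In step~2.5.1, $\point \notin \zeroes(\ginKA)$ supplies some $h \in \ginKA \subseteq J$ with $\evalpoint(h)$ a nonzero scalar, so $\evalpoint(I)$ is the unit ideal of $\bar K[X]$ and $\{1\}$ is vacuously a $\tau_X$-Gr\"obner basis. In step~2.6.4 the set $\minG$ is, by construction, the union $\bigcup_{t \in \MBLPP} G_t$ with $G_t=\{g\in G\mid \LPP_X(g)=t\}$, and $\Ic_t$ is the ideal of the corresponding leading coefficients, so the hypotheses of Nabeshima's statement in Theorem~\ref{thm:KSW-N} are met verbatim and $\evalpoint(\minG)$ is a $\tau_X$-Gr\"obner basis of $\evalpoint(J)=\evalpoint(I)$.

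For~(\ref{alg:ConsideriamoTuttiIPunti}) I would prove by induction on the iteration count the loop invariant
\[
\bar K^{N\!_A} \;=\; \bigcup_{(\pointsA,G) \in \FinalCases} \pointsA
\;\cup\; \bigcup_{\Ib \in \ZeroCasesToDo} \zeroes(\Ib),
\]
which is trivial at initialization. One pass processing $\Ia$ removes $\zeroes(\Ia)$ from the right-hand side and redistributes it: in the 2.5 branch as $(\zeroes(\Ia)\setminus\zeroes(\ginKA)) \cup \zeroes(\ginKA)$, and in the 2.6 branch (using $\zeroes(\Ia)=\zeroes(\ginKA)$) as $(\zeroes(\ginKA) \setminus \bigcup_t \zeroes(\Ic_t)) \cup \bigcup_t \zeroes(\Ic_t+\ginKA)$. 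The first summand enters $\FinalCases$ via step~2.5.1 or~2.6.4, and each of the remaining varieties is either appended to $\ZeroCasesToDo$ or skipped because some $\Ib \in \ZeroCasesToDo$ already satisfies $\Ib \subseteq \ginKA$ (respectively $\Ib \subseteq \Ic_t+\ginKA$), giving $\zeroes(\cdot) \subseteq \zeroes(\Ib)$; either way the invariant persists. When the loop exits with $\ZeroCasesToDo=\emptyset$, the invariant collapses to $\bigcup \pointsA = \bar K^{N\!_A}$, which together with~(\ref{alg:correctness}) yields the desired comprehensive Gr\"obner system. The main obstacle I anticipate is precisely the safety of the inclusion-based filter in steps~2.5.2 and~2.6.6: the invariant must be formulated in terms of \emph{all} pending ideals so that $\Ib \subseteq \ginKA$ translates into $\zeroes(\ginKA) \subseteq \zeroes(\Ib)$. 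A related subtlety, needed to defend step~2.6.4, is that when $G$ is reduced the inclusion $\Ic_t \subseteq \ginKA$ cannot occur for any $t \in \MBLPP$ (else step~2.6.5 would silently drop $\zeroes(\ginKA)$); this is an elimination-ordering consequence, since $\LC_X(g) \in \ginKA$ would force $\LPP(g)$ to be divisible by the leading power-product of some element of $G \cap K[A]$, contradicting minimality of $G$.
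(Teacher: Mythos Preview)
Your proposal is correct and follows essentially the same route as the paper: strictly ascending chains in the Noetherian ring $K[A]$ for termination, Theorem~\ref{thm:KSW-N} (Nabeshima) for the correctness of each segment, and a point-tracking argument through the main loop for coverage. Your explicit loop invariant in part~(\ref{alg:ConsideriamoTuttiIPunti}) is a tidier formulation of the paper's pointwise claim, and your closing observation---that minimality of $G$ is what guarantees the filter $\Ic_t\not\subseteq\ginKA$ in Step~2.6.5 never actually discards any $t$---plugs a small gap the paper's own proof leaves implicit.
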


\begin{proof}
  (\ref{alg:terminates})
  Each ideal $\Ib$ added to  \ZeroCasesToDo
   contains $\ginKA = (I+\Ia)\cap K[A]$,
  where $\Ia$ is its ``parent''
  (Step~[2.1]).  Let's see that $\Ib\supsetneq \Ia$.
  If added in Step~[2.5.2] we have $\Ib=\ginKA\supsetneq\Ia$
  because $\zeroes(\Ia) {\setminus} \zeroes(\ginKA)\ne\emptyset$.
  If added in Step~[2.6.6] we have $\Ib\supsetneq\ginKA\supseteq\Ia$,
  because $\Ib = \Ic_t + \ginKA$ where $\Ic_t \not\subseteq \ginKA$.
  In conclusion, we build strictly increasing chains of ideals which
  are finite by the noetherianity of $K[A]$.
  Moreover, there are finitely
  many such chains because \NewZeroCases in Step~[2.6.6] is finite,
  as $\MBLPP$ is finite for the noetherianity of $R$.
  
  (\ref{alg:correctness})
If  $(\pointsA, G)$ was added 
    in Step 2.5.1, then  $\mathcal{A}= \zeroes(\Ia){\setminus} \zeroes(\ginKA)$,
    thus $(\mathcal{A}, \; \{1\})$
    is a correct segment because $\ginKA\subseteq I+\Ia$
    therefore for any point
      $\point\in\mathcal{A}$
    the ideal  $\evalpoint(I+\Ia)$ contains a non-zero    constant.

If  $(\pointsA, G)$ was added 
    in Step 2.6.4 then  $\mathcal{A} =
    \zeroes(\Ia) \setminus (\bigcup_{t\in \MBLPP} \zeroes(\Ic_t) )$,
     thus $(\mathcal{A}, \;\; \minG)$
     is a correct segment because  each point in $\mathcal{A}$
     satisfies the hypotheses of Theorem~\ref{thm:KSW-N}.Nabeshima
     for the ideal $I+\Ia$.

  (\ref{alg:ConsideriamoTuttiIPunti})
 Let's consider first what happens in a single iteration of the main loop.
Fix a point $\point\in \zeroes(\Ia)$ at Step~2.1.
  We claim that, before the next iteration, $\point$ is added 
to $\FinalCases$ or,   to be treated in a later iteration,
as a vanishing point in  \ZeroCasesToDo.

  If $\zeroes(\Ia) {\setminus} \zeroes(\ginKA) \ne \emptyset$    then
    \begin{itemize}
  \item
if $\point\in \zeroes(\Ia) {\setminus} \zeroes(\ginKA)  $
    then it is contemplated in $\FinalCases$ (added in Step~2.5.1).
  \item
    else $\point\in\zeroes(\ginKA)$.
    Then, $\ginKA$ is added to \ZeroCasesToDo,
    unless it contains some $\Ib$ in \ZeroCasesToDo.
    In either case, $\point$ will be listed for consideration as point of
    $\zeroes(\ginKA)$ or of $\zeroes(\Ib)\subset\zeroes(\ginKA)$.
  \end{itemize}
    Otherwise, $\zeroes(\Ia) \subset \zeroes(\ginKA)$.
\begin{itemize}
  \item
    If $\point\in \zeroes(\Ia) {\setminus} (\bigcup_{t\in \MBLPP} \zeroes(\Ic_t))  $
    then $\point$ is contemplated in $\FinalCases$ (added in Step~2.6.4)
  \item
    else $\point\in\zeroes(\Ic_t)$ for some $t\in \MBLPP$.
    Then, $\Ic_t$ is added to \ZeroCasesToDo,
    unless it contains some other $\Ic_{t'}$ or $\Ib$ in \ZeroCasesToDo.
    In either case, $\point$ will be listed for consideration as point of
    $\zeroes(\Ic_t)$ directly,
    or of $\zeroes(\Ic_{t'})\subset\zeroes(\Ic_t)$,
    or of $\zeroes(\Ib)\subset\zeroes(\Ic_{t})$,
    or of $\zeroes(\Ib)\subset\zeroes(\Ic_{t'})\subset\zeroes(\Ic_t)$
    (Steps~2.6.5 and 2.6.6).
    \\
    Notice that  $\point\in\zeroes(\Ia) \subset \zeroes(\ginKA)$, then
     $\point\in \zeroes(\Ic_t) = \zeroes(\Ic_t+\ginKA)$ 
\end{itemize}
This concludes the proof of the Claim.

Now, observe that the algorithm starts with $\ideal{0}$ in $K[A]$,
so,
  at the termination of the algorithm,
  each point $\point\in K^{N\!_A}$  is in  (at least)  a set
  $\pointsA$ in $\FinalCasesI$.
\end{proof}

  Although not optimal, we can prove that many redundant branches of
computations are avoided by checking ideal membership.
In fact, throughout the
computation, the set of ideals \ZeroCasesToDo is naturally inclusion-minimalized
and new additions to it are ``inclusion-increasing''. 
More precisely,

    \begin{theorem}\label{thm:minimalized}
  Throughout the computation,
whenever an ideal $\Ib$ is added to \ZeroCasesToDo:

   \begin{enumerate}
   \item\label{ZeroCasesToDo-interridotto}
     $\Ib\not\supseteq\Iz$  for any $\Iz$ in the current \ZeroCasesToDo
     \item\label{OneWay}
     $\Ib\not\subseteq\Iz$  for any $\Iz$ 
     ever listed in \ZeroCasesToDo
     \item\label{ZeroCasesToDo-interridotto}
       \ZeroCasesToDo is minimalized
   \end{enumerate}
\end{theorem}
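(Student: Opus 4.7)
The plan is to prove the three claims simultaneously, strengthening (2) into a single invariant about the set $E$ of all ideals ever listed in \ZeroCasesToDo: at every moment of the computation, $E$ is an antichain under inclusion. Once this invariant is in place, (1) is immediate from the explicit test in Steps~[2.5.2] and [2.6.6], (2) is exactly one direction of the antichain property, and (3) follows from the containment $\ZeroCasesToDo \subseteq E$.

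I would argue by induction on the number of insertions into \ZeroCasesToDo. The base case is trivial: at initialization $E = \{\ideal{0}\}$. For the inductive step, suppose the invariant holds just before a new ideal $\Ib$ is appended, and let $\Ia$ be the ideal picked at Step~[2.1] in that iteration, so $\Ia \in E$. From the termination argument in the proof of Theorem~\ref{thm:algorithm}(\ref{alg:terminates}) I may invoke the key strict inclusion $\Ib \supsetneq \Ia$, whether $\Ib$ arises at Step~[2.5.2] (as $\ginKA$) or at Step~[2.6.6] (as $\Ic_t + \ginKA$ with $\Ic_t \not\subseteq \ginKA$). Claim~(1) then follows directly from the explicit check immediately preceding the append. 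For claim~(2), assume for contradiction that $\Ib \subseteq \Iz$ for some $\Iz \in E$: the case $\Iz = \Ia$ is ruled out at once by $\Ib \supsetneq \Ia$, and if $\Iz \ne \Ia$ then $\Iz \supseteq \Ib \supsetneq \Ia$ would force $\Iz$ and $\Ia$ to be strictly comparable inside $E$, contradicting the inductive antichain hypothesis. Thus appending $\Ib$ preserves the invariant, and (1), (2), (3) all fall out.

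The main delicate point is the distinction between ``ever listed'' and ``currently listed''. The algorithm explicitly protects only against containment with ideals currently present in \ZeroCasesToDo, so (2) cannot be read off directly from the code: one must bridge to the larger set $E$, which contains also the already-processed ideals. The bridge is exactly the combination of $\Ib \supsetneq \Ia$ with the antichain invariant applied to the pair $\{\Ia, \Iz\}$, which is why the induction must be carried out on $E$ rather than on the current state of \ZeroCasesToDo alone. No other step is hard; the only other small check is that $\Ib \supsetneq \Ia$ really holds in Step~[2.6.6], which the strict non-inclusion $\Ic_t \not\subseteq \ginKA$ enforced by Step~[2.6.5] guarantees.
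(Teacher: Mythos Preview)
Your proposed invariant---that the set $E$ of all ideals ever listed in \ZeroCasesToDo forms an antichain under inclusion---is false, and this is not a minor oversight but a structural problem with the argument. The reason is immediate from the very fact you invoke: every newly appended $\Ib$ satisfies $\Ib \supsetneq \Ia$ where $\Ia$ is its parent picked at Step~2.1, and both $\Ia$ and $\Ib$ belong to $E$. So the moment the second insertion happens, $E$ already contains a strict inclusion and ceases to be an antichain. (Even more bluntly, $\ideal{0}\in E$ forever and is contained in every later ideal.) Consequently your inductive hypothesis fails at the first inductive step, and the contradiction you derive for claim~(2)---``$\Iz$ and $\Ia$ strictly comparable inside $E$''---is no contradiction at all.

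The paper's proof sidesteps this by taking statements (1), (2), (3) themselves as the inductive hypothesis rather than any strengthened global invariant on $E$. What is actually needed for (2) is only the one-sided fact $\Ia \not\subseteq \Iz$ for each relevant $\Iz\ne\Ia$: for $\Iz$ listed \emph{before} $\Ia$ this is (2) applied at $\Ia$'s insertion, while for $\Iz$ listed \emph{after} $\Ia$ but in an earlier iteration one uses (1) at $\Iz$'s insertion, since $\Ia$ was still present in the current \ZeroCasesToDo at that moment. Combined with $\Ia\subsetneq\Ib$ this yields $\Ib\not\subseteq\Iz$. There is also the case where $\Iz$ is a sibling $\Ib_j$ added earlier in the \emph{same} pass through Step~2.6.6; here $\Ia\subsetneq\Ib_j$ does hold, so the one-sided fact fails, and one must instead appeal directly to the minimalization of \NewZeroCases in Step~2.6.5 to get $\Ib\not\subseteq\Ib_j$. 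You should handle this sibling case explicitly.
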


\begin{proof}    

    By induction.
    At the end of the first iteration \ZeroCasesToDo is either
  $\{\ideal{g}\}$ (Step 2.5.2) or   minimalized
  $\{ \Ic_t + \ginKA \mid  t {\in} \MBLPP  \text{ and }
  \Ic_t{\not\subseteq} \ginKA\}$
  (Step 2.6.5).
And all 3 statements are trivially satisfied.

Let's consider an iteration starting at Step 2.1, picking $\Ia$ from \ZeroCasesToDo;
\ZeroCasesToDo is minimalized by induction,
so $\Ia\not\subseteq \Iz$ for any $\Iz\in$ \ZeroCasesToDo.
We have $\Ia \subseteq \ginKA$ (Step 2.4).

  (1)
$\Ib\not\supseteq\Iz$ is explicitly checked in either cases:
$\Ib=\ginKA$ (Step~2.5.2)
or $\Ib\in$ \NewZeroCases (Step~2.6.6).

  (2)
If we add $\Ib=\ginKA$ in Step~2.5.2, then $\Ia \subsetneq \ginKA$.

If we add $\Ib$ from \NewZeroCases in Step~2.6.6, then
$\Ia\subseteq\ginKA\subsetneq\Ib$.

In either cases, $\Ib\not\subseteq\Iz$ for any ideal $\Iz$
     ever listed in \ZeroCasesToDo, because so was $\Ia$ (by
     inductive hypothesis), and because $\Ia\subsetneq\Ib$.


     (3)
     In particular, by (1) and (2), it follows that 
     the current \ZeroCasesToDo is minimalized.

\end{proof}

\begin{remark}\label{rem:iter-minimalization}
We just proved that many redundant computations are avoided thanks to the
  minimalizations (with respect to ideal inclusion) in Steps~2.6.5 and 2.6.6,
  and discarding ideals in Step~2.5.2.
  
Compared with the check $\zeroes(\Ia){\setminus} \zeroes(\Ib)\ne\emptyset$
described by Nabeshima in~\cite{Nabe2012} (within these
  algorithms equivalent to the condition~$\sqrt{\Ia} \ne\sqrt{\Ib}$),
our approach misses to detect
  some
redundant branches,  but it is computationally cheaper 
because it is based on \textit{ideal membership}, which can use many times
the same pre-computed \Groebner bases of the ideals, 
instead of computing the radicals, or the \textit{radical membership}
(which needs a specific \Groebner basis for each pair $\Ia,\Ib$ --
see Section~\ref{sec:Avoiding}).

Notice that we kept the 
check 
$\zeroes(\Ia){\setminus} \zeroes(\ginKA)\ne\emptyset$ in Step~2.5.
Choosing when and where to use this more effective+expensive
check, is indeed a delicate balance.
\end{remark}

\begin{remark}\label{rem:ginKA}
  In Step~2.5.2 we just append the ideal $\ginKA$ to \ZeroCasesToDo
    (if needed) and then pass to the next iteration,
  instead of 
  going on and consider the ideals given by
    the $\LC_X$ in $G$ (as in Step~2.6).
    By so doing, we ``waste'' the information included in the Gr\"obner
basis $G$ we just computed, but on the other hand, the fact that
$\ginKA$ is quite different form its ``ancestor'' $\Ia$
(it has a different vanishing set) make us believe that we'd better
reconsider $\ginKA$ in the ``whole picture'', as a new element in
the \ZeroCasesToDo list (if needed).
Moreover, before checking and appending it, we make its generators
square-free, so that, when it is processed in a following iteration
(Step~2.1), it is ``closer'' to its radical and may reduce the number
of the subsequent computations.
\end{remark}


  

\section{Timings}\label{sec:Timings}
\newcommand{\Z}{$\approx0$ s}

  In this table we compare our iterative algorithm, ``iter'',  with
  our best implementations of Nabeshima's algorithm, ``N'' and ``N$^*$'',
  in CoCoA-5.4.1v, running on a MacBookPro (i7, 2.3GHz).

  In ``N$^*$'' we disabled Nabeshima's ``RoughCheck'', because
  this operation is costly and may be intended as a post-processing applicable
  at the end of any algorithm for removing those segments whose support set is empty.
    Thus ``N$^*$'' takes less time and returns a higher number of segments.
  \medskip
  
  We used the examples listed in the papers \cite{Nabe2012}, \cite{KSW2010} and \cite{KSW2013},
  and we encountered some discrepancies, so we indicate both names.
  Moreover, for some examples, the time of the very first GB in CoCoA
is higher than the time obtained by the other authors for the whole
computation, thus we are not quite sure we are using the same
conditions/definitions. 
In particular, CoCoA could not compute the GBasis of  Nabeshima's
example S2 with $a>b>c>d$, so we changed the example into $b>c>d>a$.

\medskip

At the following link
  \begin{center}
     \small{
       \texttt{http://www.dima.unige.it/\~bigatti/data/ComprehensiveGroebnerSystems/}}
  \end{center}
the interested reader will find our
CoCoA-5 package, CGS.cpkg5 (which is part of the forthcoming
release CoCoA-5.4.2), together with the file containing
all examples (which we used to generate the table of timings).

\medskip
  To investigate the different routes followed by the two algorithms, we
  tracked the number of calls and the time spent for some crucial
  operations:

  \begin{description}
    \item[{\boldmath GB in $K[A,X]$}]  the Gr\"obner basis computations of
      $I+\Ia\in K[A,X]$ with an elimination ordering for $X$.
      This may be quite time consuming, especially for the very first
      computation: the Gr\"obner basis of the input ideal $I$.
      
    \item[{\boldmath GB in $K[A]$}]  the Gr\"obner basis computations of $\Ia\in
      K[A]$: this is information is extracted from its actual
      application within ``check $\mathfrak a\subseteq \mathfrak b$'' and
      ``check  $\Bbb V(\mathfrak a)\backslash \Bbb V(\mathfrak b){=}\emptyset$''.
    \item[{\boldmath check $\mathfrak a\subseteq \mathfrak b$}]
            only in our iterative algorithm:
      ideal membership $f\in \Ib$, using the Gr\"obner basis of $\Ib$
    \item[{\boldmath check  $\Bbb V(\mathfrak a)\backslash \Bbb
        V(\mathfrak b){=}\emptyset$}]
      (``Checking consistency'' in \cite{Nabe2012}
      radical membership $f\in \sqrt{\Ib}$,
      checking first if $f\in\Ib$ (using ``GB in $K[A]$''),
      and then using the Gr\"obner basis of $\Ib+\langle1-t{\cdot}f\rangle$.
    \item[{\boldmath product}]
only in Nabeshima's algorithm:
      the product of the ideals, $N$, passed recursively as second
      argument 
    \item[{\boldmath MB}]
      the computations of the minimal basis of the
      monomial ideal
      $\ideal{\LPP_X(f) \mid f\in GB} \; \subset K[X]$
      (Definition~\ref{def:MB})
\item[{\boldmath sqfr}]
  the computations for making 
  square-free the generators of the ideals
  in $K[A]$
  (Steps~2.5.2 and 2.6.5, and Remark~\ref{rem:ginKA})
  without computing the actual radical of the ideal.
\item[{last column}]
  final number of segments in
    the resulting CGS,   and the total time for its computation.
  \end{description}

In this table we observe that most of the time is spent in computing
the GB in $K[A,X]$ and the operations involved in removing redundant
computational branches (GB in $K[A]$, product of ideals, the checks
themselves).  The complete algorithm ``N'' also performs
``RoughCheck'' which  removes some useless segment from the final output.

\goodbreak

{\small

\noindent 
\begin{tabular}{|l
    |@{\;}>{\scriptsize}r@{\;\,}>{\footnotesize}r@{\;}
    |@{\;}>{\scriptsize}r@{\;\,}>{\footnotesize}r@{\;}
    |@{\;}>{\scriptsize}r@{\;\,}>{\footnotesize}r@{\;}
    |@{\;}>{\scriptsize}r@{\;\,}>{\footnotesize}r@{\;}
    |@{\;}>{\scriptsize}r@{\;\,}>{\footnotesize}r@{\;}
    |@{\;}>{\scriptsize}r@{\;\,}>{\footnotesize}r@{\;}
    |@{\;}>{\scriptsize}r@{\;\,}>{\footnotesize}r@{\;}
   ||@{\;}>{\scriptsize}r@{\;\,}>{\footnotesize}r@{\;}
    |}
\hline
& \multicolumn{2}{@{\;}c}{GB in}
& \multicolumn{2}{@{\;}c}{GB in}
& \multicolumn{2}{@{\;}c}{check}
& \multicolumn{2}{@{\;}c}{check ${=}\emptyset$}
& \multicolumn{2}{@{\;}c}{} 
& \multicolumn{2}{@{\;}c}{} & \multicolumn{2}{c}{} &  &
\\
& \multicolumn{2}{@{\;}c}{$K[A,X]$}
& \multicolumn{2}{@{\;}c}{$K[A]$}
& \multicolumn{2}{@{\;}c}{$\mathfrak a\subseteq \mathfrak b$}
& \multicolumn{2}{@{\;}c}{$\Bbb V(\mathfrak a)\backslash \Bbb V(\mathfrak b)$}
& \multicolumn{2}{@{\;}c}{product} 
& \multicolumn{2}{@{\;}c}{MB}
& \multicolumn{2}{@{\;}c}{sqfr} & \# & time \\\hline
\multicolumn{17}{|l|}{\textbf{SuzukiSato 3}} \\\hline
iter & 30  &  0.1s & 175  &  0.0s & 419  &  0.0s &  30  &  0.0s & -   &   -  &  29  &  0.0s & 268  &  0.0s & 30 &  0.4s\\\hline
N*   & 27  &  0.1s & 161  &  0.0s &  -   &   -  & 187  &  0.0s & 159  &  0.0s &  26  &  0.0s & 242  &  0.0s & 28 &  0.2s\\\hline
N    & 27  &  0.1s & 172  &  0.0s &  -   &   -  & 299  &  0.1s & 271  &  0.0s &  26  &  0.0s & 242  &  0.0s & 19 &  0.3s\\\hline
\multicolumn{17}{|l|}{\textbf{SuzukiSato 4}} \\\hline
iter &  8  &  0.0s & 40  &  0.0s &  54  &  0.0s &   8  &  0.0s & -   &   -  &  8  &  0.0s & 83  &  0.0s & 8 &  0.1s\\\hline
N*   &  9  &  0.0s & 42  &  0.0s &  -   &   -  &  50  &  0.0s & 40  &  0.0s &  9  &  0.0s &  91  &  0.0s & 9 &  0.1s\\\hline
N    &  9  &  0.0s & 51  &  0.0s &  -   &   -  &  80  &  0.0s & 70  &  0.0s &  9  &  0.0s &  91  &  0.0s & 8 &  0.1s\\\hline
\multicolumn{17}{|l|}{\textbf{Nabeshima F8}} \\\hline
iter & 24  &  0.1s & 109  &  0.0s & 306  &  0.0s &  24  &  0.0s & -   &   -  &  24  &  0.0s & 215  &  0.0s & 24 &  0.3s\\\hline
N*   & 23  &  0.1s & 107  &  0.0s &  -   &   -  & 129  &  0.1s & 105  &  0.0s &  23  &  0.0s & 207  &  0.0s & 23 &  0.3s\\\hline
N    & 23  &  0.1s & 115  &  0.0s &  -   &   -  & 195  &  0.1s & 171  &  0.0s &  23  &  0.0s & 207  &  0.0s & 18 &  0.4s\\\hline
\multicolumn{17}{|l|}{\textbf{Nabeshima E5 }} \\\hline
iter &  5  &  0.0s & 10  &  0.0s &   8  &  0.0s &   5  &  0.0s & -   &   -  &   4  &  0.0s &  48  &  0.0s & 5 &  0.1s\\\hline
N*   &  4  &  0.0s & 10  &  0.0s &  -   &   -  &  13  &  0.1s &  8  &  0.0s &   4  &  0.0s &  42  &  0.0s & 8 &  0.2s\\\hline
N    &  4  &  0.0s & 12  &  0.0s &  -   &   -  &  17  &  0.1s & 12  &  0.0s &   4  &  0.0s &  42  &  0.0s & 8 &  0.2s\\\hline
\multicolumn{17}{|l|}{\textbf{Nabeshima S2 -- modified --> b,c,d,a }} \\\hline
iter &  7  &  7.2s & 20  &  0.0s &  22  &  0.0s &   7  &  0.0s & -   &   -  &  7  &  0.0s & 129  &  0.0s & 7 &  7.4s\\\hline
N*   &  8  &  6.9s & 24  &  0.0s &  -   &   -  &  31  &  0.2s & 22  &  0.0s &  8  &  0.0s & 145  &  0.0s & 8 &  7.2s\\\hline
N    &  8  &  6.8s & 26  &  0.0s &  -   &   -  &  45  &  0.5s & 36  &  0.0s &  8  &  0.0s & 145  &  0.0s & 8 &  7.4s\\\hline
\multicolumn{17}{|l|}{\textbf{Nabeshima S3 = S1 KapurSunWang }} \\\hline
iter & 11  &  0.5s & 34  &  0.1s &  64  &  0.0s &  11  &  0.0s & -   &   -  &  11  &  0.0s & 186  &  0.1s & 11 &  0.8s\\\hline
N*   & 14  &  0.4s & 45  &  0.1s &  -   &   -  &  58  &  1.7s & 43  &  1.0s &  14  &  0.0s & 177  &  0.1s & 14 &  3.5s\\\hline
N    & 14  &  0.4s & 48  &  0.1s &  -   &   -  &  84  &  2.5s & 69  &  1.6s &  14  &  0.0s & 177  &  0.1s & 10 &  5.0s\\\hline
\multicolumn{17}{|l|}{\textbf{(similar to Nabeshima S4) S2 KapurSunWang }} \\\hline
iter &  4  &  0.0s &  7  &  0.0s &   2  &  0.0s &   4  &  0.1s & -   &   -  &   3  &  0.0s &  58  &  0.0s & 4 &  0.2s\\\hline
N*   &  3  &  0.0s &  7  &  0.0s &  -   &   -  &   9  &  0.2s &  5  &  0.0s &   3  &  0.0s &  57  &  0.0s & 4 &  0.3s\\\hline
N    &  3  &  0.0s &  8  &  0.0s &  -   &   -  &  11  &  0.4s &  7  &  0.0s &   3  &  0.0s &  57  &  0.0s & 4 &  0.5s\\\hline
\multicolumn{17}{|l|}{\textbf{Nabeshima S4}} \\\hline
iter & 18  &  8.6s & 51  &  0.1s &  83  &  0.0s &  18  &  0.0s & -   &   -  &  18  &  0.0s & 236  &  0.1s & 18 &  8.9s\\\hline
N*   & 21  &  9.0s & 62  &  0.1s &  -   &   -  &  82  &  7.3s & 60  & 10.6s &  21  &  0.0s & 323  &  0.1s & 21 & 28.6s\\\hline
N    & 21  &  9.0s & 65  &  0.1s &  -   &   -  & 119  &  9.5s & 97  & 11.9s &  21  &  0.0s & 323  &  0.1s & 15 & 32.3s\\\hline
\multicolumn{17}{|l|}{\textbf{(similar to Nabeshima S5) S3 KapurSunWang }} \\\hline
iter & 63  &  0.8s & 277  &  0.2s & 831  &  0.0s &  63  &  0.1s & -   &   -  & 60  &  0.1s & 975  &  0.2s & 63 &  2.0s\\\hline
N*   & 31  &  0.4s & 133  &  0.1s &  -   &   -  & 163  &  7.8s & 131  &  9.0s &  28  &  0.0s & 425  &  0.1s & 31 & 19.2s\\\hline
N    & 31  &  0.4s & 138  &  0.1s &  -   &   -  & 226  & 19.5s & 194  &  9.2s &  28  &  0.0s & 425  &  0.1s & 18 & 31.1s\\\hline
\multicolumn{17}{|l|}{\textbf{Nabeshima S5}} \\\hline
iter & 79  &  0.8s & 416  &  0.2s & 1315  &  0.0s &  79  &  0.1s & -   &   -  & 79  &  0.1s & 1202  &  0.2s & 79 &  2.5s\\\hline
N*   & 34  &  0.4s & 174  &  0.1s &  -   &   -  & 207  &  7.6s & 172  &  8.9s & 34  &  0.0s & 485  &  0.1s & 34 & 18.8s\\\hline
N    & 34  &  0.4s & 179  &  0.1s &  -   &   -  & 299  & 19.4s & 264  &  9.4s & 34  &  0.0s & 485  &  0.1s & 20 & 31.2s\\\hline
\multicolumn{17}{|l|}{\textbf{Nabeshima P3P = P3P KapurSunWang}} \\\hline
iter & 22  &  1.1s & 60  &  0.2s &  94  &  0.0s &  22  &  4.0s & -   &   -  &  21  &  0.0s & 510  &  0.3s & 22 &  5.9s\\\hline
N*   & 18  &  1.0s & 49  &  0.2s &  -   &   -  &  66  &  4.7s & 47  &6.7s &  16  &  0.0s & 329  &  0.2s & 20 & 13.9s\\\hline
N    & 18  &  1.1s & 52  &  0.2s &  -   &   -  &  95  &  7.4s & 76  &  7.2s &  16  &  0.0s & 329  &  0.2s & 19 & 17.3s\\\hline
\end{tabular}

} 

\section{Conclusion and future work}\label{sec:Future}

We believe that tackling the problem with an iterative
approach allows to have a better 
perspective on the computation: having the
  cases yet to be considered altogether in a single list makes it
  possible to compare them with the looser but faster ideal
  membership, instead of keeping
  track of them through the recursion with product of ideals and the
  costlier radical membership.
Moreover, we expect that 
  further new strategies may be developed for optimizing the computations.

  In particular, we think that there are two main areas that can be studied further:

\begin{itemize}
\item Strategies for picking the next ideal $\Ia\subseteq K[A]$ to be considered
  (Step~2.1, function  \texttt{ChooseVanishing}).

  Currently we randomly pick one out of those with maximum dimension.

\item Study possible criteria to minimize {\ZeroCasesToDo}, in order
  to reduce even further the cases that we analyze.
  
  Currently we keep in the list the minimal ones (Steps~2.5.2
  and~2.6.6), so that we have ``more zeroes'', and we make their
  generators square-free.
  But ideally with should keep the radicals of these ideals, or just keep
  the larger when two ideals sharing the same radical. Thus, finding a
  cheap check to detect ``more zeroes or closer to the radical'' should
  be a further improvement.
\end{itemize}


\backmatter

\bmhead{Acknowledgements}
The authors would like to thank Katsusuke Nabeshima, Yosuke Sato and Miwa Taniwaki.

This research was partly supported by the ``National Group for
Algebraic and Geometric Structures, and their Applications'' (GNSAGA-INdAM).

%
%



\end{document}